\newcommand\R{\mathbb{R}}
\newcommand\Z{\mathbb{Z}}
\newtheorem{Thm}{Theorem}[section]
\newtheorem{Lemma}[Thm]{Lemma}
\newtheorem{Con}[Thm]{Conjecture}
\newtheorem{Prop}[Thm]{Proposition}
\theoremstyle{remark}
\renewcommand\hat{\widehat}
\title{Note on Robins' Conjecture in Dimension Four and Higher}
\author{Oleg Asipchuk}
\address{Oleg Asipchuk,  University of Cincinnati,
	Department of Mathematical Sciences,
	Cincinnati, OH 45221, USA}
\email{asipchah@ucmail.uc.edu}
\subjclass[2020]{Primary: 42C15  
	Secondary classification: 42C30.  }
\begin{document}
 
\begin{abstract}
	This article is motivated by a conjecture proposed by Sinai Robins in 2024. The conjecture asserts that two convex, centrally symmetric sets of positive measure that are not multi-tilers must coincide up to rigid motions if and only if their Fourier transforms agree on the lattice $\mathbb{Z}^d$. In this paper, we disprove the conjecture by constructing explicit counterexamples in dimensions $d \geq 4$.
\end{abstract}

\maketitle

\section{Introduction}

For a function $f \in L^1(\mathbb{R}^d)$, its {\it Fourier transform} is defined by
\[
\hat{f}(\xi) = \int_{\mathbb{R}^d} f(x)\, e^{-2\pi i \langle x, \xi \rangle} \, dx, 
\quad \xi \in \mathbb{R}^d.
\]
If $f \in L^2(\mathbb{R}^d)$, the Fourier transform extends by density to a unitary operator on $L^2(\mathbb{R}^d)$. 

The Fourier transform is a fundamental mathematical tool that allows us to analyze functions and signals in terms of their frequency components. For further details, see \cite{Bbracewell99,Fourier,Stein2011}.

Let $A \subseteq \mathbb{R}^d$. The \emph{characteristic function} (or indicator function) of $A$ is defined by
\[
1_A(x) =
\begin{cases}
1, & \text{if } x \in A, \\
0, & \text{if } x \notin A.
\end{cases}
\]
Thus, $\hat{1}_A$ denotes the Fourier transform of the characteristic function of the set $A$.

The set $A \subset \mathbb{R}^d$ is said to \emph{$k$-tile} $\mathbb{R}^d$ with the translation set $\Lambda$ if almost every point in $\mathbb{R}^d$ is covered exactly $k$ times by the translates $A+\lambda$ for $\lambda \in \Lambda$, that is,
\begin{equation}\label{E-multi}
    \sum_{\lambda \in \Lambda} 1_{A}(x-\lambda) = k \quad \text{a.e. in } \mathbb{R}^d.
\end{equation}
A set with this property is called a \emph{multi-tiler}. In the special case $k=1$, we simply say that $A$ \emph{tiles} $\R^d$. Tilings and multi-tilings naturally arise in diverse areas such as quasicrystals, Fourier analysis, and the theory of lattices. For further details, see \cite{Agora2015,Fuglede1974,GrepstaLev2014,Iosevich2013,KolountzakisLev2021} and the references therein.

In this article, we construct a counterexample to the following conjecture.

\begin{Con}[Robins, 2024]\label{ConRobins}
    Let $\mathcal{P},\mathcal{Q}\subset \R^d$ be two convex, central symmetric bodies. Suppose $P,Q$ are not multi-tilers. Then $\hat{1}_\mathcal{P} (\xi)=\hat{1}_\mathcal{Q} (\xi)$ for all $\xi\in\Z^d$ if and only if $\mathcal{P}=\mathcal{Q}$ up to rigid motions, except on a set of measure zero.
\end{Con}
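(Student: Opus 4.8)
The plan is to refute the forward implication of Robins' biconditional: for each $d\ge 4$ I will exhibit two convex, centrally symmetric bodies $\mathcal{P},\mathcal{Q}\subset\R^d$ that are \emph{not} multi-tilers and satisfy $\hat{1}_\mathcal{P}(\xi)=\hat{1}_\mathcal{Q}(\xi)$ for all $\xi\in\Z^d$, yet are not congruent. The first step is to reformulate the Fourier hypothesis geometrically. Since $1_\mathcal{P},1_\mathcal{Q}\in L^1(\R^d)$, their $\Z^d$-periodizations $\Phi_\mathcal{P}(x)=\sum_{n\in\Z^d}1_\mathcal{P}(x-n)$ and $\Phi_\mathcal{Q}(x)=\sum_{n\in\Z^d}1_\mathcal{Q}(x-n)$ lie in $L^1(\R^d/\Z^d)$ and have Fourier coefficients exactly $\hat{1}_\mathcal{P}(\xi)$ and $\hat{1}_\mathcal{Q}(\xi)$, $\xi\in\Z^d$. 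Because an $L^1$ function on the torus is determined by its Fourier coefficients, the condition $\hat{1}_\mathcal{P}=\hat{1}_\mathcal{Q}$ on $\Z^d$ is equivalent to $\Phi_\mathcal{P}=\Phi_\mathcal{Q}$ a.e. Moreover $\Phi_\mathcal{P}$ is the covering-multiplicity function of \rq{E-multi}, so $\mathcal{P}$ fails to be a multi-tiler precisely when $\Phi_\mathcal{P}$ is non-constant. Thus the task reduces to: \emph{construct two non-congruent convex, centrally symmetric bodies in $\R^d$ sharing one and the same non-constant $\Z^d$-periodization}.

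Next I would isolate a clean mechanism guaranteeing equal periodizations. If $\mathcal{Q}$ is obtained from $\mathcal{P}$ by translating finitely many measurable pieces along integer vectors, then $1_\mathcal{P}-1_\mathcal{Q}=\sum_{j}\bigl(g_j-g_j(\,\cdot-v_j)\bigr)$ with each $v_j\in\Z^d$, and its Fourier transform carries the factors $1-e^{-2\pi i\langle v_j,\xi\rangle}$, which vanish for every $\xi\in\Z^d$; hence $\hat{1}_\mathcal{P}=\hat{1}_\mathcal{Q}$ on $\Z^d$ automatically. It is essential here that the rearrangement be by genuine integer translations: a shear by a matrix in $SL_d(\Z)$ would only permute the values $\{\hat{1}_\mathcal{P}(\xi)\}_{\xi\in\Z^d}$ rather than fix them pointwise, and so would not serve. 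The problem therefore becomes one of \emph{integer-translation scissors congruence between two convex centrally symmetric bodies}, modeled on the elementary two-square ``domino'' move but engineered to break congruence.

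I would then carry out the explicit construction. The aim is a convex centrally symmetric polytope $\mathcal{P}$ that overlaps its $\Z^d$-translates only partially, so that $\Phi_\mathcal{P}$ is non-constant, together with a centrally symmetric pair of boundary pieces that, when relocated by a vector of $\Z^d$, exactly fill congruent notches and assemble into a \emph{convex} centrally symmetric polytope $\mathcal{Q}$. Relocating a symmetric pair (rather than a single piece) preserves central symmetry; the partial-overlap design keeps $\Phi_\mathcal{P}\equiv\Phi_\mathcal{Q}$ non-constant, so neither body is a multi-tiler. To force $\mathcal{P}\not\cong\mathcal{Q}$ I would pin down an isometry invariant that separates them, for instance the multiset of facet normals together with facet areas, or a metric datum such as the diameter or the John ellipsoid. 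The role of the dimension is structural: only from $d\ge 4$ is there enough room to carry out a notch-filling integer translation whose output is convex while belonging to a different isometry class, the extra rotational freedom being what allows the rearranged body to avoid being a mere rigid copy of $\mathcal{P}$.

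The main obstacle is the tension at the heart of Step~2–3: a rigid relocation of pieces by an integer vector is exactly what keeps the Fourier data fixed pointwise, yet such relocations generically destroy convexity (whereas the convexity-preserving shears spoil the pointwise Fourier equality). Reconciling these—finding a decomposition of a convex centrally symmetric body whose integer-shifted pieces reassemble convexly—is the delicate part, and I expect the required clearance to be precisely what fails below dimension four. The remaining verifications are comparatively routine: checking that $\Phi_\mathcal{P}$ is non-constant is a direct multiplicity count, and central symmetry is ensured by construction; the genuine work is certifying non-congruence by explicitly computing a separating isometry invariant for the constructed pair.
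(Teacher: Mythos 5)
Your high-level target is the right one (refute the forward implication by exhibiting non-congruent convex centrally symmetric non-multi-tilers with matching Fourier data on $\Z^d$), and your observation that integer-translation rearrangements preserve $\hat{1}$ pointwise on $\Z^d$ is sound. But there is a genuine gap at the first step of your reduction: you equate ``$\mathcal{P}$ is not a multi-tiler'' with ``the $\Z^d$-periodization $\Phi_\mathcal{P}$ is non-constant.'' The definition \eqref{E-multi} quantifies over an \emph{arbitrary} translation set $\Lambda$, so a non-constant $\Z^d$-periodization only rules out multi-tiling by $\Z^d$ itself; the body could perfectly well $k$-tile with some other lattice or non-lattice set. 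Worse, your plan produces centrally symmetric \emph{polytopes} with (presumably rational) vertices assembled by integer cut-and-paste, and such polytopes are exactly the ones most prone to multi-tile with a finer lattice (in the plane every rational centrally symmetric convex polygon multi-tiles, and the Gravin--Robins--Shiryaev circle of results pushes this to higher dimensions under central symmetry of facets). You give no mechanism for certifying the ``not multi-tilers'' hypothesis, and your self-imposed requirement that $\Phi_\mathcal{P}$ be non-constant does not supply one. Separately, the construction itself --- the step you yourself flag as delicate, namely an integer-translation scissors congruence between two \emph{convex} centrally symmetric bodies that are not rigidly equivalent --- is never produced, so the proof is a plan with its hardest step and its hypothesis-verification both missing.

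For contrast, the paper sidesteps both difficulties. It takes a rhombus $R$ and a hexagon $H$ in $\R^2$, each of area $6$, whose Fourier transforms vanish on $\Z^2\setminus\{(0,0)\}$; note this means their $\Z^2$-periodizations are \emph{constant}, i.e.\ $R$ and $H$ are themselves multi-tilers --- the opposite of what your reduction demands. The non-multi-tiling hypothesis is then arranged in the remaining coordinates: setting $\mathcal{P}=R\times B$ and $\mathcal{Q}=H\times B$ with $B\subset\R^{d-2}$ the unit ball, the products are not polytopes, and a convex body that multi-tiles must be a polytope, so neither is a multi-tiler; meanwhile the Fourier transform factors over the Cartesian product, so $\hat{1}_\mathcal{P}$ and $\hat{1}_\mathcal{Q}$ agree on $\Z^d$. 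Non-congruence is immediate since $R$ and $H$ are not congruent. If you want to salvage your approach, the key repairs are: drop the non-constant-periodization requirement, find a different certificate of non-multi-tiling (non-polytopality is the cheapest one available), and actually exhibit the pair of bodies.
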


The main result of the article can be formulated as the following theorem.

\begin{Thm}\label{T-main}
    Conjecture \ref{ConRobins} is false for $d\geq 4$. 
\end{Thm}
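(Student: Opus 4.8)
The plan is to produce, for each $d\ge 4$, a pair $\mathcal{P},\mathcal{Q}$ of the required type with $\mathcal{P}=T\mathcal{Q}$ for a suitable integer unimodular matrix $T$. The mechanism is the transformation rule $\hat{1}_{T\mathcal{Q}}(\xi)=\hat{1}_{\mathcal{Q}}(T^{\top}\xi)$, valid because $|\det T|=1$; thus $\hat{1}_{\mathcal{P}}$ and $\hat{1}_{\mathcal{Q}}$ will agree on $\Z^d$ as soon as $\hat{1}_{\mathcal{Q}}$ is invariant, on the lattice, under $\xi\mapsto T^{\top}\xi$. At the same time I want $T$ to be \emph{not} orthogonal, so that the shape is genuinely altered and $\mathcal{P}=T\mathcal{Q}$ has a chance to be non-congruent to $\mathcal{Q}$. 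The two requirements pull in opposite directions, and reconciling them is the heart of the matter: the lattice-invariance of $\hat{1}_{\mathcal{Q}}$ must survive a non-orthogonal $T$.

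To arrange the invariance cheaply I would localize the frequency support of $\hat{1}_{\mathcal{Q}}$. Work in $\R^4=\R^2\times\R^2$ with coordinates $(x,y)$, let $\mathcal{B}=[-\tfrac12,\tfrac12]^2$ (which $1$-tiles $\R^2$ by $\Z^2$) and let $\mathcal{C}\subset\R^2$ be a Euclidean disk; set $\mathcal{Q}=\mathcal{B}\times\mathcal{C}$, so that $\hat{1}_{\mathcal{Q}}(\xi',\xi'')=\hat{1}_{\mathcal{B}}(\xi')\,\hat{1}_{\mathcal{C}}(\xi'')$. Because $\mathcal{B}$ tiles, $\hat{1}_{\mathcal{B}}(\xi')=0$ for every $\xi'\in\Z^2\setminus\{0\}$, so on the integer lattice $\hat{1}_{\mathcal{Q}}$ is supported in the plane $\{\xi'=0\}$. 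Now take $T=\begin{pmatrix} I_2 & N\\ 0 & I_2\end{pmatrix}$ with $N\in M_2(\Z)$, $N\neq 0$; then $T^{\top}=\begin{pmatrix} I_2 & 0\\ N^{\top} & I_2\end{pmatrix}$ fixes $\{\xi'=0\}$ pointwise and keeps $\{\xi'\neq 0\}$ inside itself. Hence for $\xi'=0$ one has $\hat{1}_{\mathcal{P}}(0,\xi'')=\hat{1}_{\mathcal{Q}}(0,\xi'')$, while for $\xi'\neq 0$ the first factor $\hat{1}_{\mathcal{B}}(\xi')$ kills both $\hat{1}_{\mathcal{Q}}(\xi)$ and $\hat{1}_{\mathcal{P}}(\xi)=\hat{1}_{\mathcal{B}}(\xi')\,\hat{1}_{\mathcal{C}}(N^{\top}\xi'+\xi'')$; this gives $\hat{1}_{\mathcal{P}}(\xi)=\hat{1}_{\mathcal{Q}}(\xi)$ for all $\xi\in\Z^4$. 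The sheared product $\mathcal{P}=T\mathcal{Q}$ is visibly convex and centrally symmetric.

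For non-congruence I would use the orthogonal invariant $\mu(K)=\int_K|x|^2\,dx$. Both bodies are centrally symmetric, so any rigid motion carrying $\mathcal{P}$ onto $\mathcal{Q}$ up to a null set must fix the common centroid $0$ and is therefore linear orthogonal, whence it preserves $\mu$. A direct computation gives $\mu(\mathcal{P})-\mu(\mathcal{Q})=\int_{\mathcal{Q}}\langle (T^{\top}T-I)y,y\rangle\,dy=\operatorname{vol}(\mathcal{B})\int_{\mathcal{C}}|Nz|^2\,dz$, the cross term dropping out because $\mathcal{B}$ is centered; this is strictly positive for $N\neq 0$, so $\mu(\mathcal{P})>\mu(\mathcal{Q})$ and $\mathcal{P}\not\cong\mathcal{Q}$. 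For $d>4$ I would replace $\mathcal{P},\mathcal{Q}$ by $\mathcal{P}\times[-\tfrac12,\tfrac12]^{d-4}$ and $\mathcal{Q}\times[-\tfrac12,\tfrac12]^{d-4}$: the extra cube tiles, so it contributes nothing to the integer Fourier data beyond the factor at $\xi_5=\dots=\xi_d=0$, and it adds the same constant to $\mu$, preserving both the matching of transforms and the strict inequality.

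The step requiring the most care is confirming that $\mathcal{P},\mathcal{Q}$ are not multi-tilers. Since $\mathcal{P}=T\mathcal{Q}$ with $T$ invertible, $\mathcal{P}$ multi-tiles $\R^4$ by a lattice $\Lambda$ exactly when $\mathcal{Q}$ multi-tiles by $T^{-1}\Lambda$, so it suffices to treat $\mathcal{Q}$. With respect to $\Z^4$ this is immediate: $\hat{1}_{\mathcal{Q}}(0,\xi'')=\hat{1}_{\mathcal{C}}(\xi'')$ is nonzero at some $\xi''\in\Z^2\setminus\{0\}$ because the zero set of the Bessel-type function $\hat{1}_{\mathcal{C}}$ is a union of circles, which for a generic radius of $\mathcal{C}$ misses the lattice. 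To rule out multi-tiling by an arbitrary lattice I would invoke the structural fact that a convex body multi-tiling $\R^d$ by a lattice must be a polytope; since $\mathcal{Q}=\mathcal{B}\times\mathcal{C}$ has curved boundary inherited from the disk $\mathcal{C}$, it is not a polytope and hence not a multi-tiler. It is exactly here that the choice of a smooth, non-polytopal factor $\mathcal{C}$, rather than an arbitrary centrally symmetric convex body, is essential.
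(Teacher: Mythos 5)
Your construction is correct, but it is genuinely different from the paper's. The paper takes \emph{two} distinct planar convex tiles of the same lattice --- the rhombus $R$ with vertices $(\pm1,0),(0,\pm3)$ and the hexagon $H$ with vertices $(\pm1,\pm1),(0,\pm2)$, each of area $6$ and each with $\hat{1}(\xi)=0$ for all $\xi\in\Z^2\setminus\{0\}$ --- and crosses both with the same unit ball $B\subset\R^{d-2}$, so that the Fourier data on $\Z^d$ agree factor by factor while the ball destroys the polytope property. You instead take a \emph{single} body $\mathcal{Q}=\mathcal{B}\times\mathcal{C}$ and compare it with its image under an integer unimodular shear $T$, using $\hat{1}_{T\mathcal{Q}}(\xi)=\hat{1}_{\mathcal{Q}}(T^{\top}\xi)$ together with the vanishing of $\hat{1}_{\mathcal{B}}$ on $\Z^2\setminus\{0\}$ to confine the lattice Fourier support to a $T^{\top}$-fixed subspace. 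Both arguments rest on the same two pillars --- a tiling factor killing all nonzero lattice frequencies in its coordinates, and the Gravin--Robins--Shiryaev fact that a convex multi-tiler must be a polytope, defeated by a round factor --- but they distribute the work differently: the paper must exhibit two non-congruent convex tiles of one lattice (cheap in the plane, and non-congruence of the resulting products is then evident), whereas you need only one tile but must then \emph{prove} non-congruence, which you do cleanly via the orthogonal invariant $\int_K|x|^2\,dx$ after observing that a rigid motion between origin-symmetric bodies must fix the origin. Two small points to tighten: the paper's definition of multi-tiling allows an arbitrary translation set $\Lambda$, not only lattices, so you should invoke the polytope obstruction in that generality (the cited result does hold for arbitrary translational $k$-tilings, which also makes your ``generic radius'' discussion of the zeros of $\hat{1}_{\mathcal{C}}$ unnecessary); and the phrase ``adds the same constant to $\mu$'' for the $d>4$ step is justified only because $|\mathcal{P}|=|\mathcal{Q}|$, since $\mu(K\times C)=|C|\,\mu(K)+|K|\,\mu(C)$ --- worth a sentence.
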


\medskip

The paper is organized as follows. Section 2 presents the proof of Theorem \ref{T-main} together with related remarks, while Section 3 states and proves a result illustrating the importance of convexity in Conjecture \ref{ConRobins}. 

\medskip
\noindent
{\it Acknowledgments}. The author would like to acknowledge Sinai Robins for valuable mathematical discussions and remarks.

\section{Proof of the main result} 
First, we recall a well-known result, which can be found, for example, in \cite[Lecture 2]{Kolountzakis2004}.
\begin{Lemma}\label{L-Kolountzakis}
Let $Q \subset \mathbb{R}^d$ be a measurable set of positive and finite measure, and let $\Lambda \subset \mathbb{R}^d$ be a full-rank lattice. Then
\[
Q \text{ tiles } \mathbb{R}^d \text{ by translations with } \Lambda
\quad \Longleftrightarrow \quad
\hat{1}_Q(\xi) = 0 \text{ for all } \xi \in \Lambda^{*} \setminus \{0\},
\]
where $\Lambda^{*} = \{\, \xi \in \mathbb{R}^d : \xi \cdot \lambda \in \mathbb{Z} \text{ for all } \lambda \in \Lambda \,\}$ is the dual lattice.
\end{Lemma}

Let $R$ be the rhombus with vertices  
\begin{align}\label{E-R}
    (-1,0),\ (0,3),\ (1,0),\ \text{and } (0,-3),
\end{align}
and let $H$ be the hexagon with vertices  
\begin{align}\label{E-H}
   (-1,1),\ (0,2),\ (1,1),\ (1,-1),\ (0,-2),\ \text{and } (-1,-1).
\end{align}

Before proceeding to the proof of the main theorem, we state the following proposition.
\begin{Prop}\label{P-Calculus}
    \begin{equation}\label{Eq-=}
    \hat{1}_R(\xi)=\hat{1}_H(\xi)=\begin{cases}
        6 & \text{if } \xi=(0,0) \\
        0 & \text{if } \xi \neq (0,0) 
    \end{cases}
\end{equation}
for all $\xi\in\Z^2$. 
\end{Prop}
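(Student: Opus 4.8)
The plan is to compute the Fourier transform of the indicator function of each polygon explicitly and show they agree on $\Z^2$. Both $R$ and $H$ are convex polygons symmetric about the origin, and a clean way to handle the Fourier transform of a characteristic function of a convex polygon is to reduce the two-dimensional integral to a boundary (one-dimensional) integral via the divergence theorem, or alternatively to slice the region by horizontal lines and integrate the resulting $x$-intervals. I would favor the slicing approach here since both regions have a simple description as $\{(x,y): |x| \le g(y)\}$ for a piecewise-linear function $g$: for the rhombus $R$, the constraint is $|x| \le 1 - |y|/3$ for $|y|\le 3$, and for the hexagon $H$ one has $|x|\le 1$ for $|y|\le 1$ together with the slanted caps $|x|\le 2-|y|$ for $1\le |y|\le 2$.

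The key steps would be as follows. First I would write
\begin{equation*}
\hat{1}_A(\xi_1,\xi_2)=\int_{\R}\left(\int_{-g(y)}^{g(y)} e^{-2\pi i \xi_1 x}\,dx\right) e^{-2\pi i \xi_2 y}\,dy,
\end{equation*}
so the inner integral produces either $2g(y)$ (when $\xi_1=0$) or $\frac{\sin(2\pi \xi_1 g(y))}{\pi \xi_1}$ (when $\xi_1\neq 0$). The value at $\xi=(0,0)$ is just the area, which is $6$ for both polygons (the rhombus has diagonals $2$ and $6$, giving area $\tfrac12\cdot 2\cdot 6=6$; the hexagon decomposes into a central $2\times 2$ square of area $4$ plus two triangular caps of area $1$ each), confirming the first case. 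Second, for $\xi\neq(0,0)$ I would split into the cases $\xi_1=0$, $\xi_2\neq 0$ and $\xi_1\neq 0$ separately, insert the piecewise-linear $g$, and carry out the remaining one-variable integral of products of trigonometric functions over the relevant $y$-intervals. The crucial arithmetic fact to exploit is that all breakpoints of $g$ and all slopes are rational with small denominators, so that for integer frequencies the boundary terms collapse: evaluations at integer and half-integer arguments of sine and cosine vanish or cancel in pairs, and the central symmetry forces the imaginary parts to cancel so that $\hat{1}_A$ is real.

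The main obstacle I anticipate is purely computational bookkeeping rather than conceptual: one must verify that the several trigonometric terms arising from the distinct linear pieces of the boundary cancel exactly for every nonzero integer pair $(\xi_1,\xi_2)$, including the degenerate subcases $\xi_1=0$ and $\xi_2=0$ which must be treated by taking limits or by direct evaluation of the $2g(y)$ form. The payoff of the slicing method is that the symmetry $g(-y)=g(y)$ and the integer-lattice hypothesis conspire to make these cancellations clean, so that one does not need to compute $\hat{1}_R$ and $\hat{1}_H$ in closed form for all real $\xi$ and then compare — it suffices to show each equals $0$ off the origin on $\Z^2$. I would organize the verification so that $R$ and $H$ are handled by the same lemma about integrals of $e^{-2\pi i \xi_2 y}$ against a piecewise-linear symmetric profile, thereby reducing the two computations to a single template and isolating the one genuinely delicate point, namely the cancellation at lattice points lying on the slanted edges' frequency resonances.
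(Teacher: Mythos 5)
Your proposal is correct and follows essentially the same route as the paper: a direct case analysis on whether $\xi_1$ or $\xi_2$ vanishes, computing the iterated integral of $e^{-2\pi i\langle x,\xi\rangle}$ over each polygon and checking that the resulting trigonometric terms cancel at integer frequencies, with the resonance subcases (e.g.\ $\xi_1\pm 3\xi_2=0$) handled separately. The only cosmetic difference is that you slice horizontally (integrating in $x$ first over $|x|\le g(y)$) while the paper slices vertically, which changes nothing of substance.
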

\begin{proof}
   Both $R$ and $H$ tile $\mathbb{R}^2$ by translations with the lattice
\[
\Lambda = \bigcup_{j=0}^{1} \{ (2n + j,\; 6m + 3j) : n,m \in \mathbb{Z} \}.
\]
So, by Lemma \ref{L-Kolountzakis} $\hat{1}_Q(\xi) = 0$  for all $\xi \in \Lambda^{*} \setminus \{0\}$, where 
\[
\Lambda^* = \bigcup_{j=0}^{1} \{ (n + j/2,\; m/3 + j/6) : n,m \in \mathbb{Z} \}.
\]
$\Z^2\subset \Lambda^*$, so $\xi \in \Z^2 \setminus \{0\}$. 

For $\xi = (0,0)$, a direct computation gives
\[
\widehat{1}_R(0,0) = \int_R dx = 6 = \int_H dx = \widehat{1}_H(0,0).
\]

\end{proof}

Now, we are ready to prove the main result.

\begin{proof}[Proof of Theorem \ref{T-main}]

Let $d\geq 4$. We consider two sets in $\R^2$, rhombus $R$ with vertices \eqref{E-R} and hexagon $H$ with vertices \eqref{E-H}. Both $R$ and $H$ are centrally symmetric and convex.  Moreover, the Fourier transforms of the characteristic functions of $R$ and $H$ satisfy \eqref{Eq-=}.

Next, let $B \subset \mathbb{R}^{d-2}$ denote the unit ball centered at the origin. 
We define two sets: $\mathcal{P} = R \times B$ and $\mathcal{Q} = H \times B$. 
Since they are Cartesian products of centrally symmetric convex sets, both $\mathcal{P}$ and $\mathcal{Q}$ are themselves centrally symmetric and convex.

Note that if a convex body $P$ admits a $k$-tiling, then $P$ must be a polytope \cite[Introduction]{Gravin2012}. Since neither $\mathcal{P}$ nor $\mathcal{Q}$ is a polytope, they cannot be multi-tilers.

Using \eqref{Eq-=}, we see that for all $\xi_1 \in \mathbb{Z}^2$ and $\xi_2 \in \mathbb{Z}^{d-2}$, with $\xi = (\xi_1, \xi_2)$,
\begin{align*}
 \hat{1}_{\mathcal{P}}(\xi) = \hat{1}_{R}(\xi_1)\hat{1}_{B}(\xi_2) =  \hat{1}_{H}(\xi_1)\hat{1}_{B}(\xi_2) =  \hat{1}_{\mathcal{Q}}(\xi).
\end{align*}
Therefore, the conjecture is false for $d\geq 4$.
\end{proof}

\subsection{Remarks}
\begin{enumerate}
    \item Conjecture \ref{ConRobins} has not appeared in the published literature, but it has been stated by Sinai Robins in several of his talks. A revised version of the conjecture can be found in \cite{Robins2026}.
    \item The proof of Theorem \ref{T-main} remains valid if $H$ and $R$ are replaced by any other multi-tilers whose Fourier transforms agree on the lattice $\mathbb{Z}^2$.
    \item The method described in Section 3 does not apply for $d < 4$, since in this case one of the sets must be one-dimensional. In dimension $1$, however, the only convex and centrally symmetric set is an interval, which trivially multi-tiles $\mathbb{R}$. Moreover, since $|I| = \hat{1}_I(0)$ for any interval $I \subset \mathbb{R}$, it is impossible to find two distinct intervals (up to rigid motions) whose Fourier transforms agree on the lattice $\mathbb{Z}$.
\end{enumerate}

\section{Counterexample to the conjecture without the Convexity Assumption.} 

\begin{Thm}\label{T-main1}
    Conjecture \ref{ConRobins} without the Convexity Assumption is false for $d\geq 2$. 
\end{Thm}

\begin{proof}
    We consider two sets in $\R$: $I=[0,1]$ and $J=[-1,-1/2)\cup[1/2,1]$. Note that $I$ and $J$ have measure $1$ and tile $\R$ with the same lattice $\Z$. So, by Lemma \ref{L-Kolountzakis} we conclude that   
    \begin{equation}\label{Eq-=2}
    \hat{1}_I(\xi)=\hat{1}_J(\xi)=\begin{cases}
        1 & \text{if } \xi=0 \\
        0 & \text{if } \xi \neq 0
    \end{cases}
    \end{equation}
    for all $\xi\in\Z$. 

    Next, let $B \subset \mathbb{R}^{d-1}$ denote the unit ball centered at the origin. We define two sets: $\mathcal{P} = I \times B$ and $\mathcal{Q} = J \times B$.  Since they are Cartesian products of centrally symmetric sets, both $\mathcal{P}$ and $\mathcal{Q}$ are themselves centrally symmetric. Note that $\mathcal{P}$ is a ball and $\mathcal{Q}$ is the union of two disjoint balls in $\mathbb{R}^d$, so neither of them can be a multi-tiler.

Using \eqref{Eq-=2}, we see that for all $\xi_1 \in \mathbb{Z}$ and $\xi_2 \in \mathbb{Z}^{d-1}$, with $\xi = (\xi_1, \xi_2)$,
\begin{align*}
 \hat{1}_{\mathcal{P}}(\xi) = \hat{1}_{I}(\xi_1)\hat{1}_{B}(\xi_2) =  \hat{1}_{J}(\xi_1)\hat{1}_{B}(\xi_2) =  \hat{1}_{\mathcal{Q}}(\xi).
\end{align*}
Therefore, the conjecture without the Convexity Assumption is false for $d\geq 2$.
\end{proof}

\bibliographystyle{plain}
\bibliography{Main}

\end{document}